\DeclareMathOperator{\ima}{Im}
\newtheorem{assumption}{Assumption}
\begin{document}
	\title{Parareal for Higher Index Differential Algebraic Equations}
	
	\author{Idoia Cortes Garcia         \and
		Iryna Kulchytska-Ruchka			\and
		Sebastian Sch\"ops
	}
	
	\institute{Idoia Cortes Garcia, Iryna Kulchytska-Ruchka and Sebastian Sch\"ops \at
		Technische Universit\"at Darmstadt,  64289 Darmstadt, Germany\\
		Tel.: +49 6151 16 - 24392\\
		Fax: +49 6151 16 - 24404\\
		\email{idoia.cortes@tu-darmstadt.de}, \email{iryna.kulchytska-ruchka@tu-darmstadt.de}, 
		\email{sebastian.schoeps@tu-darmstadt.de}   
	}
	
	\date{Received: date / Accepted: date}

	\maketitle
	
	\begin{abstract}
		This article proposes modifications of the Parareal algorithm for its application to higher index differential algebraic equations (DAEs).
		It is based on the idea of applying the algorithm to only the differential components of the equation and the computation of corresponding
		consistent initial conditions later on. For differential algebraic equations with a special structure as e.g. given in flux-charge
		modified nodal analysis, it is shown that the usage of the implicit Euler method as a time integrator suffices for the Parareal
		algorithm to converge. Both versions of the Parareal method 
		 are applied to numerical examples of nonlinear index 2 differential algebraic equations.
		\keywords{Parareal \and Differential algebraic equations \and Index 2 \and Modified nodal analysis}
	\end{abstract}

	\section{Introduction}
	The time-domain simulation of models from physics, finance or social sciences often leads to differential and algebraic
	equations systems.
	The systems of equations describing the transient behaviour of the required physical quantities can be both time dependent
	differential equations, such as e.g. in circuit models for the simulation of microchips and energy networks \cite{Falgout_2017ab}, or space and time dependent
	partial differential equations (PDEs), as is the case for the simulation of the electromagnetic behaviour of 
	electric machines \cite{Salon_1995aa}.
	In the latter one, typically the method of lines is used, where first a spatial discretisation method is applied to the PDE
	to obtain an only time dependent system of differential equations which is then solved in time as an initial value problem (IVP). 
	
	The time domain simulation of large systems of equations e.g. obtained from fine meshes as well as 
	fast dynamics of the excitations which require small time
	step sizes together with large time windows
	  considerably increase simulation time. In these cases, parallelisation methods allow reducing computation time. 
	  When spatial parallelisation
	  by means of domain decomposition methods is used up to saturation or whenever the time domain dynamics is the bottleneck of the simulation
	  time, parallel-in-time methods \cite{Nievergelt_1964aa,Lions_2001aa,Gander_2015aa,Takahashi_2019aa} can be used. 
	  Parareal is such an algorithm which is based on the same idea as multiple shooting 
	  methods \cite{Lions_2001aa,Gander_2015aa}. 
	  
	Many initial value problems arising from physical systems such as e.g. electric networks, spatial discretisation of some approximations to 
	Maxwell's equations or constrained mechanical systems such as the pendulum are systems of differential algebraic equations (DAEs). 
	These are systems that contain both ordinary differential equations (ODEs) as well as algebraic constraints. They convey
	analytical and numerical difficulties that do not arise when handling ODEs. This includes their potential large sensitivity towards
	small high frequent perturbations \cite{Brenan_1995aa,Lamour_2013aa} as well as their non-trivial selection of appropriate initial conditions 
	\cite{Lamour_2013aa,Estevez-Schwarz_2000ab}. One way of
	 classifying them according to the difficulties they pose is given by their  index, a natural number $\geq 0$.  Especially when using less
	  standard algorithms,
	such as e.g. Parareal, on DAEs with higher index, 
	a correct handling of the equations is of utmost importance. This work focuses on the application of the
	Parareal algorithm to index 2 DAEs. Low-index Problems have already been numerically solved by Parareal, e.g. \cite{Cadeau_2011aa}. 
	The rather straight-forward index-1 case was already discussed in \cite{Schops_2018aa,Falgout_2019aa,Falgout_2017ab}.

	The paper is structured as follows: Section~\ref{sec:dae} introduces the concept of differential algebraic equations, the tractability index
	and the problems that arise for the choice of consistent initial conditions. A first result is given for the behaviour of the implicit
	Euler scheme on DAEs with a specific structure.  In Section~\ref{sec:parareal} the classic Parareal algorithm is presented and a
	 modification of the algorithm
	is proposed for its application to index 2 tractable DAEs. 
	The (possibly) index 2 DAE for circuit simulation arising from the modified nodal
	analysis is presented and
	Parareal is applied to two  nonlinear index~2 DAEs in Section~\ref{sec:numerics}. 
	The paper concludes in Section~\ref{sec:conclusions} with a summary.

	\section{Differential Algebraic Equations}\label{sec:dae}	
	We consider initial value problems consisting of quasilinear differential algebraic equations of the form
	\begin{equation}\label{eq:quaslinDAE}
	\mathbf{A}(\mathbf{x},t)\mathbf{x}' + \mathbf{b}(\mathbf{x},t) = 0\;,
	\end{equation}
	with $\mathbf{x}:\mathcal{I}\rightarrow\mathbb{R}^{n_{\mathrm{dof}}}$, where $\mathcal{I} = [t_0\; t_{\mathrm{end}}]\subset \mathbb{R}$ is a
	time interval and $n_{\mathrm{dof}}$ the number of degrees of freedom and initial condition $\mathbf{x}(t_0) = \mathbf{x}_0$. 
	Here, $\det \mathbf{A}(\mathbf{x},t)$ can be zero, which yields a system of equations containing both differential
	equations as well as algebraic constraints.
	
	Differential algebraic equations are typically classified according to their index \cite{Brenan_1995aa,Hairer_1996aa,Lamour_2013aa}, 
	which allows evaluating
	the analytical and numerical difficulties the system may convey. Higher index systems (index $\geq 2$) require a 
	special numerical handling. There are different types of index definitions, that essentially
	coincide for linear systems \cite{Mehrmann_2015aa}.
	In this paper we introduce the projector-based tractability index \cite{Lamour_2013aa} as it allows a separation 
	of the degrees of freedom and equations into the purely differential and the algebraic ones. 
	
	Analogously to \cite{Estevez-Schwarz_2000ab,Cortes-Garcia_2020ae}, we consider DAEs with a special structure and 
	therefore take the following assumption.
	\begin{assumption}[Mass matrix]\label{ass:mass}
		We assume the spaces $\ker \mathbf{A}(\mathbf{x},t)$ and \\
		$\ima \mathbf{A}(\mathbf{x},t)$ are independent of the
		degrees of freedom $\mathbf{x}$ and depend smoothly on $t$.
	\end{assumption}

	\begin{remark}
		This assumption is mild, as many  DAEs arising from physical systems fulfil those requirements. Later on, 
		it is shown that e.g. RLC circuits described with modified nodal analysis have this property 
		(see \cite{Estevez-Schwarz_2000aa,Estevez-Schwarz_2000ab}) or even
		space discretised partial differential algebraic equations such as e.g. the eddy current problem \cite{Schops_2011ac}. Similar assumptions are taken e.g. in \cite{Lamour_1997aa} and ensure the BDF method integrates index 2 problems well.
	\end{remark}

	Let us consider quasilinear DAEs fulfilling Assumption~\ref{ass:mass} and their corresponding projectors $\mathbf{Q}(t)$ onto 
	$\ker \mathbf{A}(\mathbf{x},t)$ as well as its complementary $\mathbf{P}(t) = \mathbf{I} - \mathbf{Q}(t)$. We
	introduce the matrices 
	\begin{align*}
		\mathbf{B}(\mathbf{y}, \mathbf{x},t) &\coloneqq \frac{\partial }{\partial \mathbf{x}} \left(\mathbf{A}(\mathbf{x},t)\mathbf{y}\right)
														+ \frac{\partial }{\partial \mathbf{x}} \mathbf{b}(\mathbf{x},t)\\
		\mathbf{A}_1(\mathbf{y}, \mathbf{x},t) &\coloneqq \left(\mathbf{A}(\mathbf{x},t) + 
																\mathbf{B}(\mathbf{y}, \mathbf{x},t)\mathbf{Q}(t) \right)
														\left(\mathbf{I} -  \mathbf{P}(t)\mathbf{P}'(t)\mathbf{Q}(t)\right)\,,
	\end{align*}
	and the projectors $\mathbf{Q}_1(\mathbf{y}, \mathbf{x},t)$ onto $\ker \mathbf{A}_1(\mathbf{y}, \mathbf{x},t)$ and 
	 $\mathbf{P}_1(\mathbf{y}, \mathbf{x},t) = \mathbf{I} - \mathbf{Q}_1(\mathbf{y}, \mathbf{x},t)$. Finally, the matrix
	\begin{align}\label{eq:G2}
		\mathbf{G}_2(\mathbf{y}, \mathbf{x},t) \coloneqq \mathbf{A}_1(\mathbf{y}, \mathbf{x},t) + 
														\mathbf{B}(\mathbf{y}, \mathbf{x},t)\mathbf{P}(t)\mathbf{Q}_1(\mathbf{y}, \mathbf{x},t)
	\end{align}
	is defined.
	As we only focus on index 2 systems, we present the tractability index definition accordingly. However,
	it can be generalised to systems with index $> 2$ (see \cite{Lamour_2013aa}).  
	\begin{definition}[Tractability index \cite{Lamour_2013aa}]
		A quasilinear DAE \eqref{eq:quaslinDAE} fulfilling Assumption~\ref{ass:mass} has tractability index
		\begin{itemize}[label=\textbullet]
			\item 0, if $\mathbf{A}(\mathbf{x},t)$ is nonsingular,
			\item 1, if $\mathbf{A}(\mathbf{x},t)$ is singular and $\mathbf{A}_1(\mathbf{y}, \mathbf{x},t)$ is nonsingular, 
			\item 2, if $\mathbf{A}(\mathbf{x},t)$ and $\mathbf{A}_1(\mathbf{y}, \mathbf{x},t)$ are singular and
					$\mathbf{G}_2(\mathbf{y}, \mathbf{x},t)$ is nonsingular.
		\end{itemize}
	\end{definition}
	
	For the rest of the paper, we will choose a specific projector $\mathbf{Q}_1$, which fulfils a property that is especially helpful 
	in the setting of the implicit Euler method. This particular choice can be taken without the loss of generality 
	(see~\cite{Estevez-Schwarz_2000ab}).
	\begin{assumption}[Canonical projector \cite{Estevez-Schwarz_2000ab}] \label{ass:canonical}
		We define $\mathbf{Q}_1$ to be the canonical projector, that is, for a projector 
		$\mathbf{\tilde{Q}}_1(\mathbf{y}, \mathbf{x},t)$ onto $\ker \mathbf{A}_1(\mathbf{y}, \mathbf{x},t)$, we choose
		\begin{equation*}
			\mathbf{Q}_1(\mathbf{y}, \mathbf{x},t)  = \mathbf{\tilde{Q}}_1(\mathbf{y}, \mathbf{x},t)
												\mathbf{G}_2(\mathbf{y}, \mathbf{x},t)^{-1}
												\mathbf{B}(\mathbf{y}, \mathbf{x},t)\mathbf{P}(t)\;.							
		\end{equation*}
	\end{assumption}
	Note that here the projector fulfills the following property \cite{Estevez-Schwarz_2000ab}
	\begin{equation*}
		\mathbf{Q}_1(\mathbf{y}, \mathbf{x},t)\mathbf{Q}(t) = 0\;.
	\end{equation*} 

	\subsection{Consistent Initialisation}
	 
	 In contrast to initial value problems arising from ODEs, DAEs cannot be initialised with arbitrary initial conditions
	 $\mathbf{x}_0$, as the algebraic constraints imposed by the system have to be fulfilled by the solution at every time
	 point $t\in \mathcal{I}$. In this context we introduce the concept of consistent initial conditions.
	 \begin{definition}[Consistent initial condition c.f. \cite{Brenan_1995aa}]
	 	Let us consider an initial value problem consisting of the quasilinear DAE \eqref{eq:quaslinDAE} on the time
	 	interval $t\in \mathcal{I}$. Then, an initial condition $\mathbf{x}_0$ at $t_0$ is called consistent, if there
	 	exists a solution $\mathbf{x}^*:\mathcal{I}\rightarrow\mathbb{R}^{n}$ of \eqref{eq:quaslinDAE} fulfilling
	 	$\mathbf{x}^*(t_0) = \mathbf{x}_0$.
	 \end{definition}
	 \begin{remark}
	 	Higher index DAEs ($\geq 2$) present hidden constraints that appear only after time differentiation of the original
	 	system and thus are not explicitly accessible This complicates the choice of appropriate initial conditions,
	 	as they have to fulfil both the explicit as well as the hidden constraints.
	 \end{remark}
 	Following \cite{Estevez-Schwarz_2000ab,Lamour_2013aa,Marz_1994ab}, we make usage of the projectors defined for the tractability index to 
 	separate the degrees of freedom of the DAE system and extract the purely differential components. Their ICs are freely choosable
 	as they are not characterised by any type of algebraic constraint. Furthermore, when they are
 	prescribed together with the quasilinear DAE \eqref{eq:quaslinDAE},  a uniquely solvable initial value problem
 	arises \cite{Marz_1994ab}. They can be extracted with 
 	$$\mathbf{x}_{\mathrm{diff}}\coloneqq\mathbf{P}\mathbf{P}_1(\mathbf{y},\mathbf{x},t)\mathbf{x}$$ 
 	and, if they are fixed, the rest of the components, all of them 
 	algebraic, are uniquely determined by the values of $\mathbf{x}_{\mathrm{diff}}$ and time 
 	$t$ \cite{Estevez-Schwarz_2000ab}. Note that, for index~2 systems, two projectors are required to extract the differential components. 
 	Here a further reduction within the explicitly differentiated $\mathbf{P}\mathbf{x}$ components is made by further applying the 
 	$\mathbf{P}_1(\mathbf{y},\mathbf{x},t)$ projector. 
 	
 	With a second projector $\mathbf{T}(\mathbf{y}, \mathbf{x},t)$ onto 
 	$\ima \mathbf{Q}(t)\mathbf{Q}_1(\mathbf{y}, \mathbf{x},t)$, the index 2 variables of the DAE can be extracted.
 	For the differential index, the index 2 variables are the parts of $\mathbf{x}$ that require one time differentiation of the original 
 	system to be characterised by 
 	(hidden) algebraic constraints \cite{Brenan_1995aa}. Its complementary projector
 	$\mathbf{U}(\mathbf{y}, \mathbf{x},t) = \mathbf{I} - \mathbf{T}(\mathbf{y}, \mathbf{x},t)$ allows to extract the
 	purely differential components together with the index 1 variables, that is, the ones prescribed by the explicit
 	constraints of the system. Thus we can separate the degrees of freedom into three types (see \cite{Estevez-Schwarz_2000ab})
 	\[
 	\mathbf{x} = \underbrace{\mathbf{P}\mathbf{P}_1(\mathbf{y},\mathbf{x},t)\mathbf{x}}_{\text{index 0}} + \underbrace{\mathbf{P}\mathbf{Q}_1(\mathbf{y},\mathbf{x},t)\mathbf{x} + \mathbf{Q}\mathbf{U}(\mathbf{y},\mathbf{x},t)\mathbf{x}}_{\text{index 1}}
 	+ \underbrace{\mathbf{T}(\mathbf{y}, \mathbf{x},t)\mathbf{x}}_{\text{index 2}}\;.
 	\]

 	\subsection{Implicit Euler}
 	Given a quasilinear DAE \eqref{eq:quaslinDAE} defined on the time interval $\mathcal{I}$ with consistent initial condition $\mathbf{x}_0$ at initial 
 	time $t_0$, it can numerically be integrated with the implicit Euler method. 
 	For time steps $t_0,t_1,\ldots,t_n$ with $t_n=t_{\mathrm{end}}$, step size $t_{i+1}-t_i=h$ and approximated solutions
 	$\mathbf{x}_0,\ldots,\mathbf{x}_i$, the implicit 
 	Euler method performs for the $(i+1)$th time step the approximation
 	$$ \mathbf{A}(\mathbf{x}_{i+1},t_{i+1})\frac{\mathbf{x}_{i+1}-\mathbf{x}_{i}}{h} + \mathbf{b}(\mathbf{x}_{i+1},t_{i+1}) = 0\;.$$
 	
 	In the following section we consider DAEs with a simplified structure. For that we impose some additional requirements
 	on the index 2 components.
 	
 	\begin{assumption}[Constant projector matrices \cite{Estevez-Schwarz_2000ab}]\label{ass:constT}
 		We assume the mass matrix $\mathbf{A}$ and the space 
 		$\ima \mathbf{Q}(t)\mathbf{Q}_1(\mathbf{y}, \mathbf{x},t)$ are time and space independent
 		and thus the projectors $\mathbf{P}$, $\mathbf{Q}$, $\mathbf{T}$ and $\mathbf{U}$ are constant. 	
 		Without loss of generality (see \cite{Estevez-Schwarz_2000ab}), we consider the projector $\mathbf{T}$ to 
 		fulfill $\mathbf{T}\mathbf{P} = 0$.
 	\end{assumption}
 	Now we study the behaviour of the implicit Euler scheme applied on index 2 tractable DAEs with
 	linear index 2 components and constant mass matrix, that is, systems written as 
 	\begin{equation}\label{eq:DAElinindex2}
 		\mathbf{A}\mathbf{x}' + \mathbf{b}_1(\mathbf{U}\mathbf{x},t) + \mathbf{B}_2 \mathbf{T}\mathbf{x}=0\;.
 	\end{equation}
 	
 	\begin{assumption}[Constant $\mathbf{Q}_1$]\label{ass:Q1ct}
 		We assume the index 2 tractable DAE in \eqref{eq:DAElinindex2} has a constant projector $\mathbf{Q}_1^*$
 		onto $\ima \mathbf{Q}_1(\mathbf{U}\mathbf{x},t)$ and define $\mathbf{P}_1^* = \mathbf{I} - \mathbf{Q}_1^*$.
 	\end{assumption}
 	\begin{remark}
 		We will show later nonlinear index 2 DAEs arising from application examples that fulfil the structural assumptions
 		made in this section to demonstrate that they are not too restrictive.
 	\end{remark}
 	Before discussing several special properties of the implicit Euler scheme, some important characteristics
 	 of the projectors are presented.
 	\begin{proposition}[Projectors c.f. \cite{Estevez-Schwarz_2000ab}] \label{prop:projectors}
 		We consider a DAE as in \eqref{eq:DAElinindex2} with its corresponding projectors 
 		 fulfilling Assumptions \ref{ass:canonical}, \ref{ass:constT} and \ref{ass:Q1ct}.
 		Then it holds that
 		\begin{enumerate}[label=(\roman*),leftmargin=1cm]
 			\item $\mathbf{P}\mathbf{P}_1(\mathbf{U}\mathbf{x},t)\mathbf{P} = \mathbf{P}\mathbf{P}_1(\mathbf{U}\mathbf{x},t)\;,$
 			\item $\mathbf{U}\mathbf{Q}\mathbf{P}_1(\mathbf{U}\mathbf{x},t)\mathbf{P} = 0\;,$
 			\item $\mathbf{G}_2(\mathbf{U}\mathbf{x},t)^{-1}\mathbf{A} = \mathbf{P}_1(\mathbf{U}\mathbf{x},t)\mathbf{P}\;,$
 			\item $\mathbf{G}_2(\mathbf{U}\mathbf{x},t)^{-1}\mathbf{B}_2\mathbf{T} = \mathbf{T}\;,$
 			\item $\mathbf{P}\mathbf{P}_1(\mathbf{U}\mathbf{x},t) = 
 			\mathbf{P}\mathbf{P}_1(\mathbf{U}\mathbf{x},t)\mathbf{P}\mathbf{P}_1^*\;,$
 		\end{enumerate}
 		where the matrix $\mathbf{G}_2(\mathbf{U}\mathbf{x},t)$ is defined as \eqref{eq:G2}.
 	\end{proposition}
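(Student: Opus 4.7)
My plan is to prove the five identities individually, leveraging three key structural facts that follow from the assumptions in place: (a) by Assumption~\ref{ass:constT} the projectors $\mathbf{P},\mathbf{Q},\mathbf{T},\mathbf{U}$ are constant and $\mathbf{T}\mathbf{P}=0$, hence $\mathbf{P}'=0$ and the formula for $\mathbf{A}_1$ collapses to $\mathbf{A}_1=\mathbf{A}+\mathbf{B}\mathbf{Q}$, so $\mathbf{G}_2=\mathbf{A}+\mathbf{B}\mathbf{Q}+\mathbf{B}\mathbf{P}\mathbf{Q}_1$; (b) by Assumption~\ref{ass:canonical} we have $\mathbf{Q}_1\mathbf{Q}=0$; (c) since $\mathbf{T}$ projects onto $\ima\mathbf{Q}\mathbf{Q}_1$, every vector in the image of $\mathbf{T}$ can be written $\mathbf{Q}\mathbf{Q}_1 w$, which gives in particular $\mathbf{Q}\mathbf{T}=\mathbf{T}$, $\mathbf{U}\mathbf{Q}\mathbf{Q}_1=0$ and $\mathbf{Q}_1\mathbf{T}=0$ (the last one via $\mathbf{Q}_1\mathbf{Q}\mathbf{Q}_1=0$). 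For the structural DAE in \eqref{eq:DAElinindex2} the Jacobian $\mathbf{B}$ splits as $\mathbf{B}=\mathbf{B}_1'\mathbf{U}+\mathbf{B}_2\mathbf{T}$ with $\mathbf{B}_1'=\partial\mathbf{b}_1/\partial(\mathbf{U}\mathbf{x})$.

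For (i), the plan is to write $\mathbf{I}=\mathbf{P}+\mathbf{Q}$, use $\mathbf{P}_1\mathbf{Q}=(\mathbf{I}-\mathbf{Q}_1)\mathbf{Q}=\mathbf{Q}$ thanks to~(b), and then annihilate with $\mathbf{P}\mathbf{Q}=0$. For (ii), I will expand $\mathbf{Q}\mathbf{P}_1\mathbf{P}=\mathbf{Q}\mathbf{P}-\mathbf{Q}\mathbf{Q}_1\mathbf{P}=-\mathbf{Q}\mathbf{Q}_1\mathbf{P}$ and apply $\mathbf{U}\mathbf{Q}\mathbf{Q}_1=0$ from~(c). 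For (iv), since $\mathbf{A}\mathbf{T}=0$ (as $\mathbf{T}$ maps into $\ker\mathbf{A}$ via $\mathbf{Q}$), $\mathbf{Q}_1\mathbf{T}=0$ kills the last summand of $\mathbf{G}_2\mathbf{T}$, and $\mathbf{B}\mathbf{Q}\mathbf{T}=\mathbf{B}\mathbf{T}=\mathbf{B}_2\mathbf{T}$ using $\mathbf{U}\mathbf{T}=0$, one obtains $\mathbf{G}_2\mathbf{T}=\mathbf{B}_2\mathbf{T}$ and multiplies by $\mathbf{G}_2^{-1}$. Finally, for (v), I will use the defining identity $\mathbf{P}_1^*\mathbf{Q}_1=0$ of the projector in Assumption~\ref{ass:Q1ct} to rewrite $\mathbf{P}\mathbf{P}_1\mathbf{P}\mathbf{P}_1^*=\mathbf{P}\mathbf{P}_1\mathbf{P}-\mathbf{P}\mathbf{P}_1\mathbf{P}\mathbf{Q}_1^*$ and argue that the second term vanishes: any vector in $\ima\mathbf{Q}_1^*\subset\ima\mathbf{Q}_1$ has the form $\mathbf{Q}_1 w$, and then $\mathbf{P}\mathbf{P}_1\mathbf{P}\mathbf{Q}_1=\mathbf{P}\mathbf{P}_1(\mathbf{Q}_1-\mathbf{Q}\mathbf{Q}_1)=0-\mathbf{P}\mathbf{Q}\mathbf{Q}_1=0$; combining with (i) closes the identity.

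The most delicate step is (iii), which I expect to be the main obstacle, because it requires a careful cancellation across all three terms of $\mathbf{G}_2$. The plan is to show $\mathbf{G}_2\mathbf{P}_1\mathbf{P}=\mathbf{A}$ directly: the $\mathbf{B}\mathbf{P}\mathbf{Q}_1\mathbf{P}_1\mathbf{P}$ term is zero because $\mathbf{Q}_1\mathbf{P}_1=0$; using $\mathbf{A}_1\mathbf{Q}_1=0$ I rewrite $\mathbf{A}\mathbf{Q}_1=-\mathbf{B}\mathbf{Q}\mathbf{Q}_1$ which gives $\mathbf{A}\mathbf{P}_1\mathbf{P}=\mathbf{A}+\mathbf{B}\mathbf{Q}\mathbf{Q}_1\mathbf{P}$; and the remaining $\mathbf{B}\mathbf{Q}\mathbf{P}_1\mathbf{P}=-\mathbf{B}\mathbf{Q}\mathbf{Q}_1\mathbf{P}$ cancels the extra term exactly, leaving $\mathbf{A}$. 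Inverting $\mathbf{G}_2$ then delivers (iii). The risk in the write-up is bookkeeping: one must keep $\mathbf{P}\mathbf{Q}=\mathbf{Q}\mathbf{P}=0$ and $\mathbf{Q}_1\mathbf{Q}=0$ separate from the generally nonzero $\mathbf{Q}\mathbf{Q}_1$, which is precisely the index-2 obstruction that $\mathbf{T}$ isolates.
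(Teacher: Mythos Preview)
Your proposal is correct and follows essentially the same strategy as the paper. The paper's own proof is extremely terse: for (i) and (ii) it just names the relevant projector property, for (iii) it defers entirely to \cite{Estevez-Schwarz_2000ab}, for (iv) it states that $\mathbf{G}_2\mathbf{T}=\mathbf{B}_2\mathbf{T}$ follows from the definitions and Assumption~\ref{ass:constT}, and for (v) it cites the identity $\mathbf{P}_1\mathbf{P}_1^*=\mathbf{P}_1$ from \cite{Baumanns_2012ab} (combined with (i) this gives $\mathbf{P}\mathbf{P}_1\mathbf{P}\mathbf{P}_1^*=\mathbf{P}\mathbf{P}_1\mathbf{P}_1^*=\mathbf{P}\mathbf{P}_1$).

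Your write-up supplies the algebra the paper omits. In particular, your self-contained verification of (iii) via $\mathbf{G}_2\mathbf{P}_1\mathbf{P}=\mathbf{A}$---using $\mathbf{Q}_1\mathbf{P}_1=0$, the relation $\mathbf{A}\mathbf{Q}_1=-\mathbf{B}\mathbf{Q}\mathbf{Q}_1$ from $\mathbf{A}_1\mathbf{Q}_1=0$, and the cancellation of $\mathbf{B}\mathbf{Q}\mathbf{Q}_1\mathbf{P}$---is exactly the computation hidden behind the paper's citation. For (v) you take a marginally different path, killing $\mathbf{P}\mathbf{P}_1\mathbf{P}\mathbf{Q}_1^*$ directly via $\ima\mathbf{Q}_1^*=\ima\mathbf{Q}_1$ rather than invoking $\mathbf{P}_1\mathbf{P}_1^*=\mathbf{P}_1$, but the two are trivially equivalent. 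One small remark: in your plan for (v) you announce the identity $\mathbf{P}_1^*\mathbf{Q}_1=0$ but then (correctly) use the image inclusion instead; either works, and in fact $\ima\mathbf{Q}_1^*=\ima\mathbf{Q}_1$ is an equality, not merely a containment, by Assumption~\ref{ass:Q1ct}.
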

 	\begin{proof}
 		Property $(i)$ follows from the feature of the canonical projector shown in Assumption \ref{ass:canonical}
 		and $(ii)$ from the definition of the projector $\mathbf{T}$. In \cite[Chapter~2.3]{Estevez-Schwarz_2000ab} Property 
 		$(iii)$ is shown and proven
 		and for $(iv)$ the equivalent expression $\mathbf{B}_2\mathbf{T} = \mathbf{G}_2(\mathbf{U}\mathbf{x},t)\mathbf{T}$
 		is derived by applying the definitions of the projectors, Assumption~\ref{ass:constT} and $(i)$. Finally,
 		$(v)$ follows from the definition of $\mathbf{Q}_1^*$   that implies 
 		$\mathbf{P}_1(\mathbf{U}\mathbf{x},t)\mathbf{P}_1^* = \mathbf{P}_1(\mathbf{U}\mathbf{x},t)$ 
 		(see \cite[Appendix~A.1]{Baumanns_2012ab}). \qed
 	\end{proof}
 				
 	\begin{proposition}[Implicit Euler consistentialisation]\label{pro:impleul}
 		We consider a DAE and the corresponding projectors fulfilling Assumptions \ref{ass:canonical}, \ref{ass:constT} and \ref{ass:Q1ct}, 
 		with structure as in \eqref{eq:DAElinindex2}
 		and two initial conditions at $t_0$, the first one, $\mathbf{x}^0$, being inconsistent 
 		\begin{equation}\label{eq:inconsIC}
 			\mathbf{x}(t_0) = \mathbf{x}^0
 		\end{equation}
 		 and the second one, $\mathbf{x}_0$,
 		being consistent 
 		\begin{equation}\label{eq:consIC}
 		\mathbf{x}(t_0) = \mathbf{x}_0\;.
 		\end{equation}
 		Let $\mathbf{x}^2$ be the solution obtained at time point $t_2$ after two implicit Euler steps of the
 		IVP \eqref{eq:DAElinindex2} with inconsistent IC \eqref{eq:inconsIC} and
 		$\mathbf{x}_2$ the one obtained with consistent IC \eqref{eq:consIC} both with the same time step sizes.
 	 	If both initial conditions are such that
 		\begin{equation*}
 			\mathbf{P}\mathbf{P}_1^*\mathbf{x}^0 = \mathbf{P}\mathbf{P}_1^*\mathbf{x}_0\;,
 		\end{equation*}
 		then $\mathbf{x}^2 = \mathbf{x}_2$.
 	\end{proposition}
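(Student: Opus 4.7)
The plan is to exploit the projector decomposition from Proposition~\ref{prop:projectors} and show that the implicit Euler one-step map applied to \eqref{eq:DAElinindex2} depends on the previous iterate only through $\mathbf{P}\mathbf{P}_1^{*}$; the hypothesis then propagates immediately and yields $\mathbf{x}_2=\mathbf{x}^2$.

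Concretely, I would premultiply the implicit Euler step equation by $\mathbf{G}_2(\mathbf{U}\mathbf{x}_{i+1},t_{i+1})^{-1}$; using (iii) and (iv) of Proposition~\ref{prop:projectors}, this reduces the update to
\begin{equation*}
\mathbf{P}_1(\mathbf{U}\mathbf{x}_{i+1},t_{i+1})\mathbf{P}\frac{\mathbf{x}_{i+1}-\mathbf{x}_i}{h}+\mathbf{G}_2(\mathbf{U}\mathbf{x}_{i+1},t_{i+1})^{-1}\mathbf{b}_1(\mathbf{U}\mathbf{x}_{i+1},t_{i+1})+\mathbf{T}\mathbf{x}_{i+1}=0.
\end{equation*}
I would then apply in turn the three left projectors $\mathbf{P}\mathbf{P}_1(\mathbf{U}\mathbf{x}_{i+1},t_{i+1})$, $\mathbf{P}\mathbf{Q}_1(\mathbf{U}\mathbf{x}_{i+1},t_{i+1})+\mathbf{Q}\mathbf{U}$ and $\mathbf{T}$ corresponding to the index-0/1/2 splitting of the excerpt, obtaining three subsystems whose coupling to $\mathbf{x}_i$ I analyse separately. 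The index-1 subsystem turns out to be purely algebraic in $\mathbf{x}_{i+1}$, because $\mathbf{P}_1\mathbf{P}$ is annihilated on the left by $\mathbf{P}\mathbf{Q}_1$ (from $\mathbf{Q}_1\mathbf{P}_1=0$) and by $\mathbf{Q}\mathbf{U}$ (using the identity $\mathbf{T}\mathbf{Q}_1=\mathbf{Q}\mathbf{Q}_1$, which itself follows from $\mathbf{T}\mathbf{P}=0$ and $\mathbf{T}$ being the projector onto $\ima\mathbf{Q}\mathbf{Q}_1$). The index-0 subsystem, by property~(v) combined with (i), couples to $\mathbf{x}_i$ only through $\mathbf{P}\mathbf{P}_1^{*}\mathbf{x}_i$. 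The index-2 subsystem, after using $\mathbf{T}\mathbf{P}=0$ and the canonical identity $\mathbf{Q}_1=\mathbf{Q}_1\mathbf{P}$, reduces to a coupling through $\mathbf{Q}\mathbf{Q}_1(\mathbf{U}\mathbf{x}_{i+1},t_{i+1})\mathbf{x}_i$; splitting $\mathbf{x}_i=\mathbf{P}_1^{*}\mathbf{x}_i+\mathbf{Q}_1^{*}\mathbf{x}_i$ and using $\mathbf{Q}\mathbf{Q}_1^{*}=0$ (since $\ima\mathbf{Q}_1^{*}\subset\ima\mathbf{P}$), this further collapses to a function of $\mathbf{P}\mathbf{P}_1^{*}\mathbf{x}_i$ alone.

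With the three subsystems thus depending on $\mathbf{x}_i$ only through $\mathbf{P}\mathbf{P}_1^{*}\mathbf{x}_i$, the hypothesis $\mathbf{P}\mathbf{P}_1^{*}\mathbf{x}^0=\mathbf{P}\mathbf{P}_1^{*}\mathbf{x}_0$ makes both nonlinear solves at $t_1$ into the same problem, so $\mathbf{x}_1=\mathbf{x}^1$; in particular $\mathbf{P}\mathbf{P}_1^{*}\mathbf{x}_1=\mathbf{P}\mathbf{P}_1^{*}\mathbf{x}^1$, and a second application of the same argument at $t_2$ yields $\mathbf{x}_2=\mathbf{x}^2$.

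The main obstacle I expect is the projector bookkeeping in the index-2 subsystem: unlike in the index-0 case, where property~(v) applies directly, one must chain together $\mathbf{T}\mathbf{P}=0$, $\mathbf{Q}_1=\mathbf{Q}_1\mathbf{P}$, $\mathbf{T}\mathbf{Q}_1=\mathbf{Q}\mathbf{Q}_1$ and $\mathbf{Q}\mathbf{Q}_1^{*}=0$ before the coupling finally collapses to $\mathbf{P}\mathbf{P}_1^{*}\mathbf{x}_i$, and verifying each of these identities on the fly is where most of the technical effort lies.
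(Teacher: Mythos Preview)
Your overall strategy---premultiply the Euler step by $\mathbf{G}_2^{-1}$ and split via projectors---is exactly the paper's. Your handling of the index-0 and index-1 blocks is also essentially correct and matches the paper's $\mathbf{U}$-equation. The gap is in the index-2 block, and it is not just bookkeeping.

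You claim that $\ima\mathbf{Q}_1^{*}\subset\ima\mathbf{P}$, hence $\mathbf{Q}\mathbf{Q}_1^{*}=0$. This is false precisely when the DAE has index~2: by Assumption~\ref{ass:Q1ct}, $\ima\mathbf{Q}_1^{*}=\ima\mathbf{Q}_1$, and $\mathbf{T}$ is the projector onto $\ima\mathbf{Q}\mathbf{Q}_1$, which is nontrivial; so $\ima\mathbf{Q}_1\not\subset\ker\mathbf{Q}=\ima\mathbf{P}$. (What \emph{does} hold is $\mathbf{Q}_1\mathbf{Q}=0$, i.e.\ the canonical identity you correctly use elsewhere; but that is not the same as $\mathbf{Q}\mathbf{Q}_1=0$.) Consequently, the $\mathbf{T}$-equation at $t_{i+1}$ still couples to $\mathbf{x}_i$ through $\mathbf{Q}\mathbf{Q}_1(\mathbf{U}\mathbf{x}_{i+1},t_{i+1})\mathbf{x}_i$, which depends on $\mathbf{U}\mathbf{x}_i$ and \emph{not} only on $\mathbf{P}\mathbf{P}_1^{*}\mathbf{x}_i$. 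Your conclusion $\mathbf{x}^1=\mathbf{x}_1$ after one step is therefore too strong; if it were true, the Proposition would need only one Euler step, not two.

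The paper's proof resolves this by a genuinely two-stage argument. After the first step one only gets $\mathbf{U}\mathbf{x}^1=\mathbf{U}\mathbf{x}_1$ (your index-0 and index-1 analysis), while $\mathbf{T}\mathbf{x}^1$ and $\mathbf{T}\mathbf{x}_1$ may differ because the $\mathbf{T}$-equation at $t_1$ sees $\mathbf{U}\mathbf{x}^0$ versus $\mathbf{U}\mathbf{x}_0$. At the second step the $\mathbf{T}$-equation sees $\mathbf{U}\mathbf{x}^1=\mathbf{U}\mathbf{x}_1$ and $\mathbf{U}\mathbf{x}^2=\mathbf{U}\mathbf{x}_2$ (the latter by repeating the $\mathbf{U}$-argument), so now $\mathbf{T}\mathbf{x}^2=\mathbf{T}\mathbf{x}_2$ as well, giving $\mathbf{x}^2=\mathbf{x}_2$. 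To fix your proof, drop the claim $\mathbf{Q}\mathbf{Q}_1^{*}=0$ and replace the ``one-step'' conclusion by this two-stage propagation.
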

 	\begin{proof}
 		The proof proceeds similarly to the approach taken in \cite[Chapter~2.5]{Estevez-Schwarz_2000ab} for the computation of consistent initial
 		conditions for index 2 DAEs. The superscript $i$ is used to denote
 		the solutions of the implicit Euler method for time step $t_i$ starting with inconsistent initial condition $\mathbf{x}^0$ 
 		and subscript $i$ for the solutions with consistent initial condition $\mathbf{x}_i$. 
 		
 		Application of the implicit Euler scheme
 		yields for the first time step with step size $h$
 		\begin{align*}
 			\mathbf{A}\frac{\mathbf{x}^1-\mathbf{x}^0}{h} + \mathbf{b}_1(\mathbf{U}\mathbf{x}^1,t_1)
 			+ \mathbf{B}_2\mathbf{T}\mathbf{x}^1 &= 0\\
 			\mathbf{A}\frac{\mathbf{x}_1-\mathbf{x}_0}{h} + \mathbf{b}_1(\mathbf{U}\mathbf{x}_1,t_1)
 			+ \mathbf{B}_2\mathbf{T}\mathbf{x}_1 &= 0\;.
 		\end{align*}
 		The equations are then multiplied by $\mathbf{G}_2(\mathbf{U}\mathbf{x}^1,t_1)^{-1}$ and 
 		$\mathbf{G}_2(\mathbf{U}\mathbf{x}_1,t_1)^{-1}$, respectively, which leads to
 		\begin{align*}
 			\mathbf{P}_1(\mathbf{U}\mathbf{x}^1,t_1)\mathbf{P}\frac{\mathbf{x}^1-\mathbf{x}^0}{h}
 			+ \mathbf{G}_2(\mathbf{U}\mathbf{x}^1,t_1)^{-1}\mathbf{b}_1(\mathbf{U}\mathbf{x}^1,t_1) 
 			+ \mathbf{T}\mathbf{x}^1 &=0\\
 			\mathbf{P}_1(\mathbf{U}\mathbf{x}_1,t_1)\mathbf{P}\frac{\mathbf{x}_1-\mathbf{x}_0}{h}+
 			\mathbf{G}_2(\mathbf{U}\mathbf{x}_1,t_1)^{-1}\mathbf{b}_1(\mathbf{U}\mathbf{x}_1,t_1)+
 			\mathbf{T}\mathbf{x}_1 &= 0
 		\end{align*}
 		due to Proposition~\ref{prop:projectors} $(iii)$ and $(iv)$. 
 		
 		Now we multiply both equations by $\mathbf{T}$ and $\mathbf{U}$ to split them into the parts
 		defining $\mathbf{T}\mathbf{x}^1$ (respectively $\mathbf{T}\mathbf{x}_1$) and 
 		$\mathbf{U}\mathbf{x}^1$ (respectively $\mathbf{U}\mathbf{x}_1$). Thus we have
 		\begin{align*}
 		\mathbf{T}\mathbf{P}_1(\mathbf{U}\mathbf{x}^1,t_1)\mathbf{P}\frac{\mathbf{x}^1-\mathbf{x}^0}{h}
 		+ \mathbf{T}\mathbf{G}_2(\mathbf{U}\mathbf{x}^1,t_1)^{-1}\mathbf{b}_1(\mathbf{U}\mathbf{x}^1,t_1) 
 		+ \mathbf{T}\mathbf{x}^1&=0\\
 			\mathbf{U}\mathbf{P}_1(\mathbf{U}\mathbf{x}^1,t_1)\mathbf{P}\frac{\mathbf{x}^1-\mathbf{x}^0}{h}
 			+ \mathbf{U}\mathbf{G}_2(\mathbf{U}\mathbf{x}^1,t_1)^{-1}\mathbf{b}_1(\mathbf{U}\mathbf{x}^1,t_1) &=0
 		\end{align*}
 		and
 		\begin{align*}
 		\mathbf{T}\mathbf{P}_1(\mathbf{U}\mathbf{x}_1,t_1)\mathbf{P}\frac{\mathbf{x}_1-\mathbf{x}_0}{h}
 		+ \mathbf{T}\mathbf{G}_2(\mathbf{U}\mathbf{x}_1,t_1)^{-1}\mathbf{b}_1(\mathbf{U}\mathbf{x}_1,t_1) 
 		+ \mathbf{T}\mathbf{x}_1&=0\\
 		\mathbf{U}\mathbf{P}_1(\mathbf{U}\mathbf{x}_1,t_1)\mathbf{P}\frac{\mathbf{x}_1-\mathbf{x}_0}{h}
 		+ \mathbf{U}\mathbf{G}_2(\mathbf{U}\mathbf{x}_1,t_1)^{-1}\mathbf{b}_1(\mathbf{U}\mathbf{x}_1,t_1) &=0\;.
 		\end{align*}
 		Application of Proposition \ref{prop:projectors} $(ii)$, $(i)$ and $(v)$ together with equality
 		$\mathbf{P}\mathbf{P}_1^*\mathbf{x}^0 = \mathbf{P}\mathbf{P}_1^*\mathbf{x}_0$ to the equations
 		defining the $\mathbf{U}$ components, and $\mathbf{P}\mathbf{T} = 0$ yields
 		\begin{align}
 		\mathbf{T}\mathbf{P}_1(\mathbf{U}\mathbf{x}^1,t_1)\mathbf{P}\mathbf{U}\frac{\mathbf{x}^1-\mathbf{x}^0}{h}
 		+ \mathbf{T}\mathbf{G}_2(\mathbf{U}\mathbf{x}^1,t_1)^{-1}\mathbf{b}_1(\mathbf{U}\mathbf{x}^1,t_1) 
 		+ \mathbf{T}\mathbf{x}^1&=0\label{eq:Tincons}\\
 		\mathbf{U}\mathbf{P}\mathbf{P}_1(\mathbf{U}\mathbf{x}^1,t_1)\frac{\mathbf{x}^1-\mathbf{x}_0}{h}
 		+ \mathbf{U}\mathbf{G}_2(\mathbf{U}\mathbf{x}^1,t_1)^{-1}\mathbf{b}_1(\mathbf{U}\mathbf{x}^1,t_1) &=0\label{eq:Uincons}
 		\end{align}
 		and
 		\begin{align}
 		\mathbf{T}\mathbf{P}_1(\mathbf{U}\mathbf{x}_1,t_1)\mathbf{P}\mathbf{U}\frac{\mathbf{x}_1-\mathbf{x}_0}{h}
 		+ \mathbf{T}\mathbf{G}_2(\mathbf{U}\mathbf{x}_1,t_1)^{-1}\mathbf{b}_1(\mathbf{U}\mathbf{x}_1,t_1) 
 		+ \mathbf{T}\mathbf{x}_1&=0\label{eq:Tcons}\\
 		\mathbf{U}\mathbf{P}\mathbf{P}_1(\mathbf{U}\mathbf{x}_1,t_1)\frac{\mathbf{x}_1-\mathbf{x}_0}{h}
 		+ \mathbf{U}\mathbf{G}_2(\mathbf{U}\mathbf{x}_1,t_1)^{-1}\mathbf{b}_1(\mathbf{U}\mathbf{x}_1,t_1) &=0\;.\label{eq:Ucons}
 		\end{align}
 		Here, \eqref{eq:Uincons} and \eqref{eq:Ucons} are to be solved to obtain the solutions for $\mathbf{U}\mathbf{x}^1$ and
 		$\mathbf{U}\mathbf{x}_1$, respectively. It can be seen that both equations are equivalent and their solution only depends
 		 on $\mathbf{P}\mathbf{P}_1^*\mathbf{x}_0$.
 		Therefore they yield the same solution, i.e. $\mathbf{U}\mathbf{x}^1 = \mathbf{U}\mathbf{x}_1$.
 		
 		To obtain $\mathbf{T}\mathbf{x}^1$ and $\mathbf{T}\mathbf{x}_1$, \eqref{eq:Tincons} and \eqref{eq:Tcons} have to be solved.
 		Again both equations are equivalent, however, whereas the first one depends on $\mathbf{U}\mathbf{x}^1$ and $\mathbf{U}\mathbf{x}^0$,
 		the second one depends on $\mathbf{U}\mathbf{x}_1$ and $\mathbf{U}\mathbf{x}_0$.  Due to the previous step, 
 		$\mathbf{U}\mathbf{x}^1 = \mathbf{U}\mathbf{x}_1$, but,  $\mathbf{U}\mathbf{x}^0$ is not necessarily equal to $\mathbf{U}\mathbf{x}_0$.
 		Thus, if $\mathbf{U}\mathbf{x}^0 = \mathbf{U}\mathbf{x}_0$, then $\mathbf{T}\mathbf{x}^1=\mathbf{T}\mathbf{x}_1$ and only
 		one implicit Euler step is required to obtain the same solution with both initial conditions $\mathbf{x}^0$ and $\mathbf{x}_0$.
 		
 		If $\mathbf{U}\mathbf{x}^0 \neq \mathbf{U}\mathbf{x}_0$, then the analogous procedure is repeated to obtain the solution
 		for $t_2$.  This time, the index 2 components $\mathbf{T}\mathbf{x}^2$ and 
 		$\mathbf{T}\mathbf{x}_2$ are again defined by the same equation and depend on $\mathbf{U}\mathbf{x}^1$ and $\mathbf{U}\mathbf{x}^2$
 		or $\mathbf{U}\mathbf{x}_1$ and $\mathbf{U}\mathbf{x}_2$, respectively. Due to the previous step, $\mathbf{U}\mathbf{x}^1 = \mathbf{U}\mathbf{x}_1$ and, analogously as before, we obtain
 		$\mathbf{U}\mathbf{x}^2 = \mathbf{U}\mathbf{x}_2$ and thus $\mathbf{x}_2 = \mathbf{x}^2$. \qed
 	\end{proof}
 
 \begin{remark}
 	Note that, as the implicit Euler scheme starting with a consistent initial condition yields solutions that are
 	consistent for semiexplicit index 2 DAEs, $\mathbf{x}_2$ 
 	is consistent \cite{Brenan_1995aa}. 
 	Therefore, as $\mathbf{x}^2=\mathbf{x}_2$, $\mathbf{x}^2$ is
 	a consistent solution that is obtained after two implicit Euler steps with an inconsistent
 	initial condition. In practice this consistency is only obtained up to a certain tolerance which depends
 	e.g. on the accuracy of the Newton scheme.
 \end{remark}
	 In \cite{Estevez-Schwarz_2000ab} a similar result is shown for the implicit Euler scheme. However, for schemes starting with only inconsistent 
	 index 2 variables, that is, schemes where $\mathbf{U}\mathbf{x}^0 = \mathbf{U}\mathbf{x}_0$ (in fact, only 
	 $\mathbf{P}\mathbf{x}^0 = \mathbf{P}\mathbf{x}_0$ is required). We have extended this result to consider DAEs, where 
	 also the $\mathbf{P}\mathbf{Q}_1^*$ components might be inconsistent. Furthermore we have shown that the $\mathbf{P}\mathbf{P}_1^*$
	 components of the initial condition are ``remembered'' by the time integration scheme, which is a key property for 
	 the Parareal algorithm considered next in this paper.

	\section{Parareal}\label{sec:parareal} 
	In the following section we will introduce the parallel-in-time method Parareal and study its application to differential
	algebraic equations.
	
	Let us consider the initial value problem of the quasilinear DAE \eqref{eq:quaslinDAE}. To apply the Parareal algorithm, first
	the time interval $\mathcal{I}$ is partitioned into $N$ smaller time windows 
	$\mathcal{I}_n = (T_{n-1},\, T_n]$ of size $\Delta T = (t_{\mathrm{end}}- t_0) / N$,
	with $T_0=t_0$ and $T_N = t_{\mathrm{end}}$. 
	In each iteration $k$ Parareal solves
	in parallel the $N$ initial value problems
	\begin{align}\label{eq:IVPPR}
		\mathbf{A}(\mathbf{x}_n,t)\mathbf{x}_n' + \mathbf{b}(\mathbf{x}_n,t) = 0, && \mathbf{x}_n(T_{n-1}) = \mathbf{X}^k_{n-1}
		&& \text{for }t\in\mathcal{I}_n\;,
	\end{align}
	with $\mathbf{X}^k_0 = \mathbf{x}_0$ and $n=1,\ldots,N$. This, however requires initial conditions for each subwindow $\mathcal{I}_n$,
	which  are a priori unknown. Therefore, the algorithm has to start with incorrect initial conditions at the interface points $T_n$.
	This yields jumps in the solution across subwindows, which Parareal
	tries to iteratively eliminate by updating the initial conditions. Thus, in addition to the parallel computation of the $N$
	initial value problems described previously, Parareal performs an update formula for the initial conditions which for the $(k+1)$th
	iteration reads \cite{Lions_2001aa,Gander_2007ac}
	\begin{align}\label{eq:PRupdate}
		\mathbf{X}_n^{k+1} = \mathcal{F}(T_n,T_{n-1},\mathbf{X}_{n-1}^k) + 
							\mathcal{G}(T_n,T_{n-1},\mathbf{X}_{n-1}^{k+1}) -
							\mathcal{G}(T_n,T_{n-1},\mathbf{X}_{n-1}^k)\;,
	\end{align}
	for $n=1,\ldots,N$.
	Here, $\mathcal{F}$ and $\mathcal{G}$  are solution operators of the initial value problem called 
	the fine and coarse propagator, respectively. 
	
	The fine propagator
	$\mathcal{F}(T_n,T_{n-1},\mathbf{X}_{n-1}^k)$ returns the solution of \eqref{eq:IVPPR} at time step $T_n$. It solves the problem 
	in a very accurate way and is thus
	computationally expensive to apply. However, as for the $(k+1)$th iteration  this solution only requires the initial condition computed
	at the previous Parareal iteration $\mathbf{X}_{n-1}^k$, it is performed in parallel. Therefore, simulation time
	is still reduced with respect to a sequential computation. 
	
	The second operator,
	$\mathcal{G}(T_n,T_{n-1},\star)$ gives the solution of the initial value problem at time point $T_n$ with initial condition $\star$
	at $T_{n-1}$. However, $\mathbf{X}_{n-1}^{k+1}$ is a solution of the current Parareal iteration and thus can not be computed in parallel.
	Therefore, the coarse solver has to be applied sequentially. To ensure computation time is reduced as much as possible,
	this operator has to be cheap to compute and as a consequence less accurate. Here, for example, a larger time step size can be employed
	or a reduced system can be solved.
	
	Combining the parallel computation of the expensive and accurate fine propagator with the sequential computation of the cheap 
	coarse propagator yields each Parareal iteration to require less computation time than the sequential simulation approach.
	This establishes a link to other parallelisation methods such as multiple shooting methods (see \cite{Gander_2015aa} for an historical overiew) 
	or multigrid approaches \cite{Falgout_2014aa}.
	
	\subsection{Parareal for DAEs}\label{sec:PRDAE}
	
	The application of the Parareal algorithm for nonlinear ordinary differential equations and its convergence is already studied \cite{Gander_2008aa}.
	However, for the case of higher index DAEs, its convergence has not been studied yet and its applicability is not ensured. 
	Here, especially for nonlinear DAEs, it can happen that the update formula \eqref{eq:PRupdate} yields an inconsistent
	initial condition for the fine solver, which may lead to divergence of the algorithm, slower
	convergence or an incorrect solution.
	Nevertheless, the algorithm has been applied to DAEs previously.  For example in \cite{Cadeau_2011aa} the algorithm is applied to a system of
	DAEs without a special handling.  In \cite{Schops_2018aa} it is 
	applied to index 1 DAEs with a special structure and in \cite{Falgout_2019aa} with a modified Parareal algorithm. Both approaches
	are special cases of the theory that is given in this paper.
	
	A modification of the Parareal algorithm  to be applied to quasilinear index 2 DAEs is presented here. The method
	is similar to the one given in \cite{Lamour_1994aa,Lamour_1997aa} for multiple shooting methods. There it is ensured that the 
	algorithm works by means of extending the Jacobian that is computed to update the initial conditions to avoid it being singular.  
	This is achieved by including an equation for the calculation of consistent initial conditions. 
	The idea behind the method here is to only apply the Parareal algorithm on the purely differential components of the DAE 
	and then compute the rest of the degrees of freedom accordingly. For that,  the update formula \eqref{eq:PRupdate} is
	restricted to
	\begin{align}\label{eq:PRupdateDAE}
		\mathbf{\hat{X}}_n^{k+1} = \mathbf{P}\mathbf{P}_1(\mathbf{\tilde{X}}_n^k, T_n)\mathbf{\tilde{X}}_n^k +
								\mathbf{P}\mathbf{P}_1(\mathbf{\bar{X}}_n^{k+1}, T_n)\mathbf{\bar{X}}_n^{k+1} -
								\mathbf{P}\mathbf{P}_1(\mathbf{\bar{X}}_n^{k}, T_n)\mathbf{\bar{X}}_n^{k}\;,
	\end{align}
	where $\mathbf{\tilde{X}}_n^k \coloneqq \mathcal{F}(T_n,T_{n-1},\mathbf{X}_{n-1}^k)$ and
	 $\mathbf{\bar{X}}_n^{k}\coloneqq\mathcal{G}(T_n,T_{n-1},\mathbf{X}_{n-1}^k)$. The resulting solution
	$\mathbf{\hat{X}}_n^{k+1}$ is then used to compute the consistent initial condition $\mathbf{X}_n^{k+1}$ such that
	\begin{equation}\label{eq:PP1=0}
		\mathbf{P}\mathbf{P}_1(\mathbf{X}_n^{k+1}, T_n)\left(\mathbf{X}_n^{k+1} - \mathbf{\hat{X}}_n^{k+1}\right)= 0\;.
	\end{equation}
	This can either be done analytically for simple DAEs  or with numerical techniques
 	as proposed e.g. in \cite{Estevez-Schwarz_2000ab,Estevez-Schwarz_2018ab}. The algorithm of \cite{Estevez-Schwarz_2018ab} is implemented in 
 	a Python package
 	called InitDAE\footnote{\url{https://www2.mathematik.hu-berlin.de/~lamour/software/python/InitDAE/html/InitDAE_Integration2020_3_7/}}, 
 	which is able to numerically compute consistent initial conditions for DAEs.
 	
 	Note that is approach can also be performed on DAEs with index higher than two. In that case, the projectors to extract the purely differential
 	components in \eqref{eq:PRupdateDAE} and \eqref{eq:PRupdateDAE} have to be adapted accordingly (see \cite{Lamour_2013aa}). 
 	
 	\subsubsection{Implicit Euler as propagator}\label{sec:eulpr}
	Note that, as the update \eqref{eq:PRupdateDAE} in the Parareal algorithm is performed sequentially, the computation of the 
	initial conditions $\mathbf{X}_n^{k+1}$ out of the obtained solution after the update $\mathbf{\hat{X}}_n^{k+1}$ is also 
	done in a sequential manner. If this operation is computationally expensive, then it can considerably increase the 
	simulation time of the Parareal algorithm. Furthermore, one of the advantages of the Parareal method is that, as it is not intrusive, 
	i.e., it can
	even be applied to black box simulators (as long as you can prescribe initial values). In such cases,  
	obtaining the explicit matrices
	of the DAE system that is solved might not be possible. These two inconveniences can be overcome for DAEs with a specific structure
	by means of using the implicit Euler method as a time integrator.
	
	\begin{proposition}[Parareal with implicit Euler]\label{prop:preul}
		We consider a DAE with structure as in \eqref{eq:DAElinindex2} and the corresponding projectors fulfilling Assumptions \ref{ass:canonical}, 
		\ref{ass:constT} and \ref{ass:Q1ct} and apply Parareal with the implicit Euler scheme on the coarse and the fine level.
		If
		\begin{itemize}
			\item for index~1 at least one time step is used on the fine level,
			\item for index~2 at least two time steps are used on the fine level,
		\end{itemize}
	 Parareal converges without the requirement of explicitly making the initial conditions consistent.
	\end{proposition}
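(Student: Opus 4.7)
The plan is to show that under the stated hypotheses the Parareal iterates produced by the unmodified update \eqref{eq:PRupdate} generate the same sequence of fine- and coarse-propagator outputs as the modified consistent Parareal scheme \eqref{eq:PRupdateDAE}--\eqref{eq:PP1=0}, and then to appeal to convergence of the latter. The bridge between the two is Proposition~\ref{pro:impleul}: two implicit Euler steps (respectively one, in the index-1 setting) on a DAE of the form \eqref{eq:DAElinindex2} completely ``forget'' any inconsistency in the initial condition, depending only on its $\mathbf{P}\mathbf{P}_1^{*}$ part. Hence, even if \eqref{eq:PRupdate} returns an inconsistent $\mathbf{X}_n^{k+1}$, the subsequent fine propagator call emits exactly the trajectory one would have obtained from a consistent IC sharing the same $\mathbf{P}\mathbf{P}_1^{*}$ projection.

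For index~2 I would proceed by induction on $k$. The induction hypothesis is that $\mathbf{P}\mathbf{P}_1^{*}\mathbf{X}_{n-1}^{k}$ agrees in the unmodified and modified schemes for every $n$; at $k=0$ this is trivial from the shared initial data $\mathbf{X}_0=\mathbf{x}_0$. In the inductive step I invoke Proposition~\ref{pro:impleul} in full for $\mathcal{F}$ (at least two fine steps) to obtain pointwise equality of the fine outputs $\mathbf{\tilde X}_n^{k}$ across the two schemes, and its intermediate conclusion $\mathbf{U}\mathbf{x}^{1}=\mathbf{U}\mathbf{x}_{1}$ (already after one step) for $\mathcal{G}$. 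Combined with the identity $\mathbf{P}\mathbf{P}_1^{*}=\mathbf{P}\mathbf{P}_1^{*}\mathbf{U}$ --- which follows from $\mathbf{P}\mathbf{Q}=0$ together with $\ima\mathbf{T}\subset\ima\mathbf{Q}\mathbf{Q}_1^{*}$ under Assumption~\ref{ass:Q1ct} --- this forces $\mathbf{P}\mathbf{P}_1^{*}\mathbf{\bar X}_n^{k}$ to match as well. Applying $\mathbf{P}\mathbf{P}_1^{*}$ to the linear update \eqref{eq:PRupdate} and using Property~$(v)$ of Proposition~\ref{prop:projectors} to relate the state-dependent $\mathbf{P}\mathbf{P}_1(\mathbf{U}\mathbf{x},t)$ to the constant $\mathbf{P}\mathbf{P}_1^{*}$ then closes the induction by identifying $\mathbf{P}\mathbf{P}_1^{*}\mathbf{X}_n^{k+1}$ with the corresponding projection of the consistent iterate from \eqref{eq:PRupdateDAE}--\eqref{eq:PP1=0}.

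Once the two schemes produce the same fine and coarse outputs, convergence of the unmodified Parareal reduces to convergence of the modified one, which inherits standard Parareal rates \cite{Gander_2008aa} on the essentially ODE-like subproblem in $\mathbf{P}\mathbf{P}_1^{*}\mathbf{x}$. The index-1 case is strictly easier: a single implicit Euler step on an index-1 DAE already yields a consistent iterate, playing the role of Proposition~\ref{pro:impleul} with one step, so one fine step suffices and the same induction carries through. I expect the main obstacle to be the projector bookkeeping in the inductive step --- in particular, rigorously reducing the nonlinear $\mathbf{P}_1(\mathbf{U}\mathbf{x},t)$ to the constant $\mathbf{P}_1^{*}$ via Property~$(v)$ and Assumption~\ref{ass:Q1ct}, and confirming that the implicit consistency equation \eqref{eq:PP1=0} uniquely determines the modified scheme's $\mathbf{X}_n^{k+1}$ once its $\mathbf{P}\mathbf{P}_1^{*}$ component is fixed.
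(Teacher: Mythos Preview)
Your proposal is correct and follows essentially the same approach as the paper: apply the constant projector $\mathbf{P}\mathbf{P}_1^{*}$ to the classic update \eqref{eq:PRupdate}, invoke Proposition~\ref{pro:impleul} for the fine propagator (two steps) and its intermediate one-step conclusion for the coarse propagator, and conclude that the unmodified scheme is equivalent to the modified one \eqref{eq:PRupdateDAE}--\eqref{eq:PP1=0}. Your explicit induction on $k$ and the identity $\mathbf{P}\mathbf{P}_1^{*}=\mathbf{P}\mathbf{P}_1^{*}\mathbf{U}$ make rigorous what the paper presents only as an informal sketch immediately after the proposition statement.
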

	Let us consider the DAE  \eqref{eq:DAElinindex2} fulfilling
	Assumptions \ref{ass:constT} and \ref{ass:Q1ct}, as required in Proposition~\ref{pro:impleul}. Here, a 
	constant projector $\mathbf{P}\mathbf{P}_1^*$ onto the differential components exists, and
	thus the classic Parareal update
	\begin{align*}
		\mathbf{\hat{X}}_n^{k+1} = \mathbf{\tilde{X}}_n^k +\mathbf{\bar{X}}_n^{k+1} - \mathbf{\bar{X}}_n^{k}
	\end{align*}
	implies
	\begin{align*}
	\mathbf{P}\mathbf{P}_1^*\mathbf{\hat{X}}_n^{k+1} = \mathbf{P}\mathbf{P}_1^*\mathbf{\tilde{X}}_n^k +
	\mathbf{P}\mathbf{P}_1^*\mathbf{\bar{X}}_n^{k+1} - \mathbf{P}\mathbf{P}_1^*\mathbf{\bar{X}}_n^{k}\;.
	\end{align*}
	Implicit Euler yields a consistent solution after at most two
	 time steps (see Proposition~\ref{pro:impleul}). Furthermore, its solution corresponds to the one obtained with a consistent initial condition where
	 the $\mathbf{P}\mathbf{P}_1^*$ coincide, that is,
	 \begin{align*}
	 	\mathbf{P}\mathbf{P}_1^*\left(\mathbf{X}_n^{k+1} - \mathbf{\hat{X}}_n^{k+1}\right) = 0\;.
	 \end{align*}
 	Thus, if on the fine level at least two Euler steps are performed, the solution at the end of the interval is 
 	equivalent to first computing the consistent initial condition $\mathbf{X}_n^{k+1}$ and then 
 	 starting the simulation with it. On the coarse level, already one implicit Euler step suffices, as only the
 	 $\mathbf{P}\mathbf{P}_1^*$ components are relevant for the update and those are remembered by the time integration
 	 method also in the first time step, as shown in the proof of Proposition~\ref{pro:impleul}.
	
	Therefore, for DAEs fulfilling the requirements of Proposition~\ref{pro:impleul} it is not necessary to 
	use the modified Parareal update \eqref{eq:PRupdateDAE} and the subsequent computation of a consistent 
	initial condition. Here, the usage of the implicit Euler as coarse propagator and performing at least
	two implicit Euler steps on the fine level are sufficient for the Parareal algorithm to converge.
	
	In \cite{Schops_2018aa} this property is exploited for an index~1 DAE solved with Parareal and the implicit Euler scheme.
	
	The approach taken in \cite{Falgout_2019aa}, however, is a special case of the algorithm proposed in Section~\ref{sec:PRDAE}
	for index~1 DAEs. There, all the components are added in the Parareal update formula \eqref{eq:PRupdate}, which is equivalent 
	to \eqref{eq:PRupdateDAE} if the projector $\mathbf{P}$ is constant and the algebraic variables are made consistent afterwards.
	Both cases are covered within the formalised results of this paper, and the theory is expanded to index~2 DAEs.
	
	\section{Numerical Examples}\label{sec:numerics}
	In the following, two nonlinear index 2 differential algebraic equations are solved with two versions of
	the Parareal algorithm, the classic one and the modified Parareal algorithm for DAEs from Section~\ref{sec:PRDAE}.
	
	\subsection{Nonlinear Index~2 DAE}
	 To test the proposed algorithm, we first consider the following index 2 DAE as a toy example
	 \begin{subequations}\label{eq:daeex1}
	 	\begin{align}
	 	x_0' + g(x_2) 	&= 0\\
	 	x_1' - x_2		&=0 \\
	 	x_1-0.015\sin(2\pi 10 t) &=0\;,
	 	\end{align}
	 \end{subequations}
	 with degrees of freedom $\mathbf{x}^{\top} = (x_0,\, x_1,\, x_2)$ and nonlinear function
	 \begin{equation}
		 g(x) = 
		 \begin{cases} 0 &\mbox{if } x \leq 1\;, \\
		 e^{-(x-1)^{-2}} & \mbox{if } 1 < x \leq 2\;, \\
		 e^{-(x-1)^{-2}}-\frac{1}{8}e^{\frac{3}{4}}e^{-(x-2)^{-2}} & \mbox{otherwise.}
		 \end{cases}
	 \end{equation}
	 Here, the projector matrices $\mathbf{P}$, $\mathbf{P}_1(\mathbf{x},t)$ are 
	 \begin{align} \label{eq:PP1}
	 	\mathbf{P}  = 
	 	\begin{pmatrix}
	 	1 & 0 & 0\\
	 	0 & 1 & 0\\
	 	0 & 0 & 0
	 	\end{pmatrix}
	 	\quad \text{and} \quad
	 	\mathbf{P}_1(\mathbf{x},t)  = 
	 	\begin{pmatrix}
	 	1 & \frac{\partial}{\partial x_2}g(x_2) & 0\\
	 	0 & 0 & 0\\
	 	0 & -1 & 1
	 	\end{pmatrix}\;.
	 \end{align}
	 Note that the index 2 component $x_2$ appears nonlinearly in the DAE \eqref{eq:daeex1}. Therefore, this example does not fulfil 
	 the requirements for the consistentialisation of the implicit Euler scheme and
	 Proposition~\ref{prop:preul} does not necessarily hold. This implies that starting the Euler scheme with an inconsistent initial 
	 value $\mathbf{x}^0$ does not
	 necessarily yield the same solution after two time steps than starting with the consistent initial condition $\mathbf{x}_0$ with the 
	 same $\mathbf{P}\mathbf{P}_1(\mathbf{x}_0,t)$ components. 
	 
	 To exemplify this behaviour a counterexample is presented. Let us consider the inconsistent initial condition 
	 $\mathbf{x}^0 = (0,\, -1,\; 0)^{\top}$ at $t_0=0$. Its corresponding consistent initial condition $\mathbf{x}_0$  with
	 \begin{equation}\label{eq:prconsist}
	 	\mathbf{P}\mathbf{P}_1(\mathbf{x}_0,t)(\mathbf{x}_0-\mathbf{x}^0)=0
	 \end{equation}
	 is $\mathbf{x}_0 = (0,\; 0,\; 0.3\pi)^{\top}$. After two implicit Euler steps with e.g. time step $h=1/3$ and starting with the inconsistent
	 initial condition $\mathbf{x}^0$, $$x_0^2 = -g(0.045\sin(20\pi/3)+3)/3\neq0$$ is obtained. The same scheme
	 for initial value $\mathbf{x}_0$, however, yields the correct solution
	 $$x_{0,2} = 0$$ if initialised with the corresponding consistent value $\mathbf{x}_0$ and thus $x_0^2\neq x_{0,2}$.
	 
	 Note that this example
	 is an artificially created DAE without dynamics, due to the definition of the nonlinear function
	 $g(x)$ and the fact that $x_2$ is always $\leq 1$.
	 
	 To study the proposed modification of the Parareal algorithm, we apply the variants:
	 
	 	\begin{itemize}
	 	\item\textbf{PR Euler} \\
	 	In the first algorithm no special handling is implemented, that is, classic Parareal with implicit Euler
	 	as time integrator is applied with a small time step size $\delta t$ for the fine solver and a larger one $\Delta T$
	 	for the coarse propagator. 
	 	
	 	\item\textbf{PR Init}\\
	 	For the second simulation, in the update formula we only consider the $\mathbf{P}\mathbf{P}_1(\mathbf{x},t)$ components as in 
	 	\eqref{eq:PRupdateDAE}. Afterwards,
	 	the corresponding consistent initial condition with the same $\mathbf{P}\mathbf{P}_1(\mathbf{x},t)$ 
	 	components as in \eqref{eq:PP1=0} is computed. 
	 \end{itemize}
	 	For this particular example, the Parareal update \eqref{eq:PRupdateDAE}, with $\mathbf{P}\mathbf{P}_1$ as shown in
	 	\eqref{eq:PP1}, is
	 	\begin{align*}
	 	(\hat{x}_0)_n^{k+1} ={}& (\tilde{x}_0)_n^k + \frac{\partial }{\partial x}g(x)\Bigr\rvert_{(\tilde{x}_2)_n^k}(\tilde{x}_1)_n^k
	 	+ (\bar{x}_0)_n^{k+1} + \frac{\partial }{\partial x}g(x)\Bigr\rvert_{(\bar{x}_2)_n^{k+1}}(\bar{x}_1)_n^{k+1}\\
	 	&-(\bar{x}_0)_n^k - \frac{\partial }{\partial x}g(x)\Bigr\rvert_{(\bar{x}_2)_n^k}(\bar{x}_1)_n^k\;.
	 	\end{align*}
	 	Once the updated value $(\hat{x}_0)_n^{k+1}$ is obtained, the consistent initial condition is computed analytically
	 	with
	 	\begin{align*}
	 	(x_2)_n^{k+1} &= 0.3\pi\cos(20\pi T_n)\\
	 	(x_1)_n^{k+1} &= 0.015\sin(20\pi T_n)\,,
	 	\end{align*}
	 	and using \eqref{eq:prconsist} with projector matrices \eqref{eq:PP1} the differential variable is obtained
	 	\begin{align*}
	 	(x_0)_n^{k+1} &= (\hat{x}_0)_n^{k+1} +  \frac{\partial }{\partial x}g(x)\Bigr\rvert_{(x_2)_n^{k+1}}( 0 - (x_1)_n^{k+1} )\;.
	 	\end{align*}
 
 \begin{figure}
 	\centering
 	\begin{subfigure}{0.49\textwidth}
 		\centering
 		\includegraphics{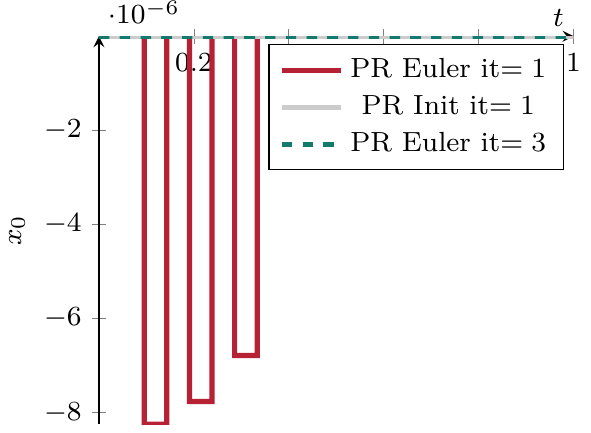}
 		\caption{Index 0 component $x_0$.}
 	\end{subfigure}%
 \begin{subfigure}{0.49\textwidth}
 	\centering
 	\includegraphics{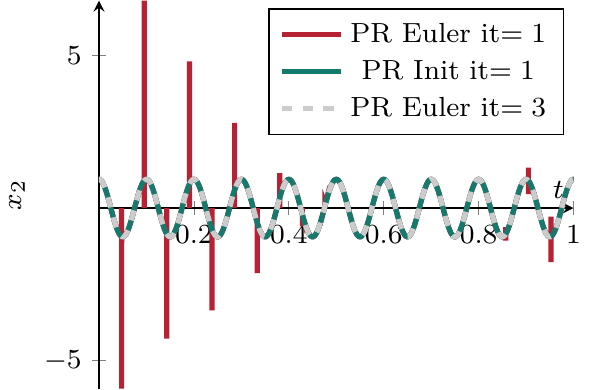}
 	\caption{Index 2 component $x_2$.}
 	\end{subfigure}
 \caption{Solution of the Parareal algorithm after the 1st and 3rd iterations for 
 	the classic algorithm `PR Euler' and the modified algorithm for DAEs `PR Init'.}\label{fig:daetoy}
 \end{figure}
	
	For both simulations $N= 21$ processors are chosen and the simulation time window $\mathcal{I} = [0\; 1)$. The time step
	size of the fine implicit Euler propagator is set to 
	$\delta t = 10^{-5}$ and the coarse solver is chosen to perform
	one time step per window and thus has time step size $\Delta T = 1/N$.
	The Parareal algorithm is iterated until the $l_2$ norm of the difference between 
	the $\mathbf{P}\mathbf{P}_1$ components of the solution at the end of interval $\mathcal{I}_{n-1}$ and
	the initial condition of $\mathcal{I}_{n}$ for all $T_n$  is below a relative tolerance of $5\cdot 10^{-4}$ 
	and an absolute tolerance of $10^{-10}$ (see error norm of \cite{Hairer_2000aa}). 
	
	Whereas the first algorithm (PR Euler) requires 3 iterations to reach the required tolerance, the second algorithm (PR Init)
	converges immediately after the 1st Parareal iteration. The obtained solutions for the purely differential
	component $x_0$ and the index two variable $x_2$ are depicted in Figure~\ref{fig:daetoy}. Here it can be seen that
	the proposed algorithm for DAEs `PR Init' obtains the correct solution for the index two variable $x_2$ already at the first
	iteration. Therefore, the algorithm converges immediately, whereas the classic Parareal algorithm has jumps on $x_2$ that yield
	an incorrect solution also for the differential component $x_0$. After 3 Parareal iterations the classic Parareal algorithm
	manages to reduce the jumps on both $x_2$ and $x_0$ but this is not covered by theory.

	 Even in this simple case without dynamics, where the 
	Parareal algorithm should converge immediately, the classic algorithm requires 3 iterations due to the inconsistency of the
	index~2 variable, which introduces large errors in the nonlinearity term affecting thus the solution of the entire system.

	\subsection{Circuit with Modified Nodal Analysis}
	To exemplify the theoretical results presented in Sections~\ref{sec:dae} and~\ref{sec:eulpr} as well as
	demonstrate that the assumptions that are taken are realistic for real-life applications, we further
	present an example arising from the description of an electric network.
	
	\subsubsection{Modified Nodal Analysis}\label{sec:mna}
	We consider electric networks containing capacitors (C), inductors (L), resistors (R) and voltage (V)
	and current (I) sources.
	In modified nodal analysis, networks are described by means of incidence matrices 
	$\mathbf{A}_{\star}$, $\star\in\{\mathrm{C,L,R,V,I}\}$ that characterise the branch-to-node relation of the underlying graph
	for the corresponding elements.
	Applying Kirchhoff's current law and the lumped parameter models of the different components, we obtain
	the following system of DAEs \cite{Estevez-Schwarz_2000aa,Gunther_2005aa}
	\begin{subequations}\label{eq:fluxchargemna}
		\begin{align}
		\mathbf{A}_{\mathrm{C}}\mathbf{q}' + \mathbf{A}_{\mathrm{R}}\mathbf{g}_{\mathrm{R}}(\mathbf{A}_{\mathrm{R}}^{\top}\mathbf{e},t) +
		\mathbf{A}_{\mathrm{L}}\mathbf{i}_{\mathrm{L}} + \mathbf{A}_{\mathrm{V}}\mathbf{i}_{\mathrm{V}} +
		\mathbf{A}_{\mathrm{I}}\mathbf{i}_{\mathrm{s}}(t)&=0\\
		\mathbf{q}-\mathbf{q}_{\mathrm{C}}(\mathbf{A}_{\mathrm{C}}^{\top}\mathbf{e},t) &= 0\\
		\boldsymbol{\phi}'-\mathbf{A}_{\mathrm{L}}^{\top}\mathbf{e}&=0\\
		\boldsymbol{\phi} - \boldsymbol{\phi}_{\mathrm{L}}(\mathbf{i}_{\mathrm{L}},t) &=0\\
		\mathbf{A}_{\mathrm{V}}^{\top}\mathbf{e}-\mathbf{v}_{\mathrm{s}}(t)&=0\;.
		\end{align}
	\end{subequations}
	Here the system of DAEs is given in the flux-charge formalism. 
	In this formulation, the additional degrees of freedom are $\mathbf{e}:\mathcal{I}\rightarrow\mathbb{R}^{n_{\mathrm{e}}}$, 
	the vector of node potentials, 
	$\mathbf{i}_{\star}:\mathcal{I}\rightarrow\mathbb{R}^{n_{\star}}$, the vector of currents through branches
	containing the element $\star$, $\mathbf{q}:\mathcal{I}\rightarrow\mathbb{R}^{n_{\mathrm{C}}}$, the vector
	of charges in capacitances and $\boldsymbol{\phi}:\mathcal{I}\rightarrow\mathbb{R}^{n_{\mathrm{L}}}$, the vector
	of fluxes in inductances. Finally, $\mathbf{q}_{\mathrm{C}}(\cdot)$, $\boldsymbol{\phi}_{\mathrm{L}}(\cdot)$,
	$\mathbf{g}_{\mathrm{R}}(\cdot)$, $\mathbf{i}_{\mathrm{s}}(\cdot)$ and $\mathbf{v}_{\mathrm{s}}(\cdot)$ are
	(nonlinear) functions describing the lumped parameter relations for the different elements. The vector of node potentials
	and the incidence matrices allow extracting the voltages across the branches containing a given element $\star$ with the relation
	$\mathbf{v}_{\star} = \mathbf{A}_{\star}^{\top}\mathbf{e}$.
	
	The tractability index of this system has already been analysed (see e.g. \cite{Estevez-Schwarz_2000aa}) and is in the worst case 2.
	This result is given by only topological properties of the underlying graph.

	Note that, unlike in the classic MNA formulation, the flux-charge system \eqref{eq:fluxchargemna} yields a system of
	DAEs with a constant mass matrix. This is achieved thanks to the introduction of the degrees of freedom 
	$\mathbf{q}$ and $\boldsymbol{\phi}$.
	
	The index analysis in \cite{Estevez-Schwarz_2000aa} shows that the possible index 2 components of the system are
	currents through voltage sources $\mathbf{i}_{\mathrm{V}}$ and voltages across inductances 
	$\mathbf{A}_{\mathrm{L}}^{\top}\mathbf{e}$. These two degrees of freedom appear linearly in the original 
	system \eqref{eq:fluxchargemna} and thus flux-charge MNA has linear index 2 components \cite{Baumanns_2010aa}. Therefore, 
	system \eqref{eq:fluxchargemna} is, in the worst case, an index 2 tractable DAE with linear index 2 components
	and constant mass matrix as described in \eqref{eq:DAElinindex2}. Finally, in \cite{Estevez-Schwarz_2000aa} it is also shown that
	Assumption~\ref{ass:Q1ct} is fulfilled, as $\ima \mathbf{Q}_1(\mathbf{U}\mathbf{x},t)$ is constant. This allows
	the application of Proposition~\ref{pro:impleul} to the system of DAEs obtained from flux-charge MNA and thus
	the implicit Euler scheme returns a consistent solution after at most two time steps even if an inconsistent
	initial condition is given.

	\subsubsection{Example}
	The second nonlinear index 2 DAE arises from a circuit described
	with flux-charge modified nodal analysis (see Section~\ref{sec:mna}). 
	We consider the nonlinear index 2 circuit of Figure~\ref{fig:circuit} 
	with linear parameters $R_{1,1} = 10^{-2}\,\Omega$, $R_{1,2} = 10^{-2}\,\Omega$, $L_1 = 10^{-4}\,$H and
	current source $$i_1(t) = (100\sin(100\pi t) +  50\sin(400\pi t))\,\text{A}\;.$$ For the nonlinear inductance the model of
	\cite{Capua_2016aa} is used with nominal inductance $L_{\mathrm{nom}} = 10^{-3}\,$H,
	deep saturation inductance $L_{\mathrm{deepsat}} = 8\cdot10^{-4}$, smoothness factor $\sigma = 5\cdot 10^{-2}$
	and current $I_{\mathrm{L}}^* = 90\,$A.
	\begin{figure}
		\centering
		\includegraphics{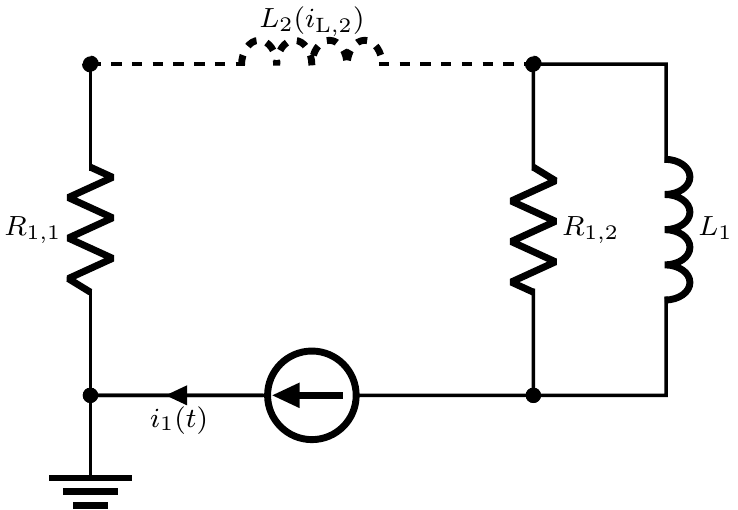}
		\caption{Index 2 circuit with nonlinear inductance $L_2(i_{\mathrm{L},2})$ as described in \cite{Capua_2016aa}.
				Figure based on \cite{Cortes-Garcia_2020ae}.}\label{fig:circuit}
	\end{figure}

	As it has been explained in Section~\ref{sec:mna},
	the system of DAEs arising from flux-charge MNA fulfills the requirements to apply Proposition~\ref{pro:impleul}. Thus,
	following the idea of Section~\ref{sec:eulpr}, we do not require any special handling and can apply classic Parareal to that system
	of equations without any drawback. 	
	
	To exemplify this, we apply again Parareal twice. Once the classic version `PR Euler' and the second version designed for DAEs
	`PR Init'. Unlike in the first example, the consistent initial conditions are not
	computed manually, but numerically with the Python package InitDAE. For the `PR Init'
	algorithm, the projectors $\mathbf{P}\mathbf{P}_1*$ are used for the update formula.
	In both simulations $N=15$ processors are chosen and the simulation time window is set to 
	$\mathcal{I}= [0,\;0.2)$. 
	For the fine solution implicit Euler with a time step size of $\delta t = 10^{-5}$ is used and the coarse
	implicit Euler solver performs one time step per window. 
	The initial condition $\mathbf{x}_0$ is computed by starting an implicit Euler scheme with an inconsistent value $\mathbf{x}^{-2}=0$ at time step
	$t_0-2\delta t$ and performing two steps until $t_0$. Exploiting Proposition~\ref{pro:impleul}, the obtained solution $\mathbf{x}_0$ for the
	rest of the simulation is 
	a consistent initial condition.
	The error is computed as in the previous example and
	the relative tolerance is set to $10^{-4}$, whereas the absolute tolerance is chosen to be $10^{-8}$. 
	
	This time
	both algorithms require 4 Parareal iterations to reach the required tolerance. The result is not surprising, 
	as both algorithms supposed to be  doing the same on the differential, and thus transient part, of the system and the convergence depends only 
	on the problem.

	\begin{figure}
		\centering
		\begin{subfigure}{0.48\textwidth}
			\centering
			\includegraphics{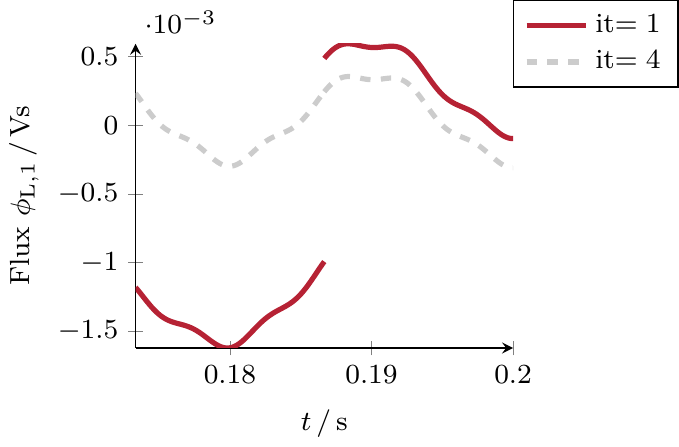}
			\caption{Solution with the classic Parareal algorithm and implicit Euler as time integrator.}
		\end{subfigure}\hfill
	\begin{subfigure}{0.48\textwidth}
		\centering
		\includegraphics{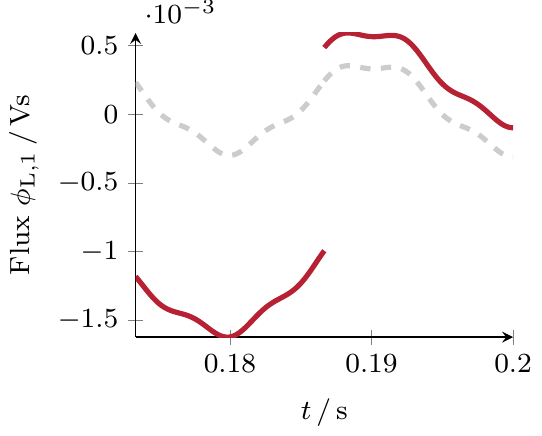}
		\caption{Solution with the modified Parareal algorithm for DAEs.}
	\end{subfigure}
	\begin{subfigure}{0.48\textwidth}
	\centering
	\includegraphics{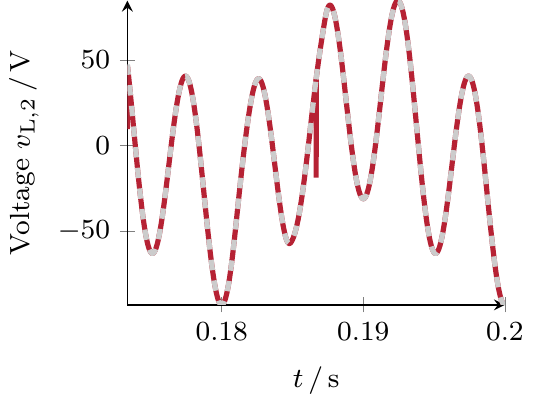}
	\caption{Solution with the classic Parareal algorithm and implicit Euler as time integrator.}
	\end{subfigure}\hfill
	\begin{subfigure}{0.48\textwidth}
	\centering
	\includegraphics{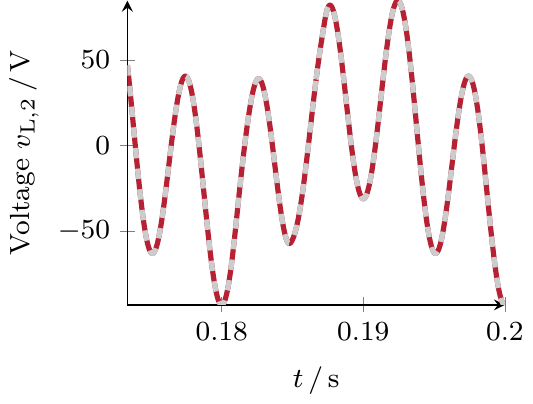}
	\caption{Solution with the modified Parareal algorithm for DAEs.}
	\end{subfigure}
		\caption{Solution of the Parareal algorithm for the last two windows $\mathcal{I}_{14}$ and $\mathcal{I}_{15}$.}\label{fig:circsol}
	\end{figure}
		
	In Figure~\ref{fig:circsol}, the purely differential component $\phi_{\mathrm{L},1}$ and the index 2 component
	$v_{\mathrm{L,2}}$
	of the fine solutions on the last two windows $\mathcal{I}_n$, $n=14,15$ are depicted
	for  the first and the last Parareal iterations. It can be seen that in both cases the purely
	differential component has a jump at the first iteration and becomes continuous at the last one, which is a typical
	behaviour of the Parareal algorithm. As in the previous example, the index 2 component is immediately smooth and converged at the first iteration
	for the `PR Init' algorithm, whereas `PR Euler' starts with an inconsistent solution at the first step in the first iteration. Here
	the behaviour of the implicit Euler scheme when starting with inconsistent initial conditions
	 can be observed: the solution jumps to the correct value due to 
	Property~\ref{pro:impleul}. This, however, does not negatively affect  the convergence of `PR Euler', as the purely differential components
	are handled equally in both algorithms.
	\begin{remark}
		Note that, even for a DAE fulfilling the requirements of Proposition~\ref{pro:impleul}, it can happen that `PR Init'  
		may reach the required tolerance for the differential components, while the algebraic ones have not reached the required accuracy yet.
		Let us consider the solution at $\mathcal{I}_n$.
		For example, for index~1 components it suffices to take the result of the 
		last time step of the previous window $\mathcal{I}_{n-1}$ instead of the initial value at the beginning of
		$\mathcal{I}_n$ to ensure a consistent value at time $T_{n-1}$. 
		The index~2 components would in addition require to either ignore the first (possibly)
		inconsistent time step of $\mathcal{I}_n$ at $T_{n-1}+\delta t$ (with $\delta t$ being the time step size of the fine propagator), or perform
		one extra time step of the solution of $\mathcal{I}_{n-1}$ to arrive at the time $T_{n-1}+\delta t$.
	\end{remark}

	\section{Conclusions}\label{sec:conclusions}
	This article has presented a modification of the Parareal algorithm for its application to quasilinear index 2 tractable DAEs.
	Its extension to higher index systems requires extra care, however it follows analogously from the projector-based decoupling
	of differential algebraic equations. For a large class of DAEs i.e. linear index~2 components and constant mass matrix
	as given in flux-charge formulated modified nodal analysis,
	a new property of the implicit Euler scheme is proven. This property allows the usage of the classic Parareal algorithm, as long
	as the implicit Euler scheme is used as the time integrator of both the first two time steps of the fine as well as for the coarse propagator.
	
	The theoretical results are backed up by numerical simulations of two DAEs, one toy example with nonlinear index 2 components and the other one arising 
	from a flux-charge modified nodal analysis formulated circuit. As theoretically expected, the modified Parareal algorithm speeds up the convergence when
	applied to a DAE with nonlinear index~2 components.
	
	\subsection*{Acknowledgement}
	We would like to thank Diana Est\'evez Schwarz and Lennart Jansen for the fruitful discussions as well as Pia Callmer 
	for her assistance in implementing the algorithm. This work is based on Chapter~5 of the PhD thesis of 
	Idoia Cortes Garcia \cite{Cortes-Garcia_2020ae}.

	This work is supported by the Graduate School CE within the Centre for Computational Engineering at Technische Universität Darmstadt and DFG Grant SCHO1562/1-2 and BMBF Grant 05M2018RDA (PASIROM).

\end{document}